%% file: main.tex
\documentclass[11pt, reqno]{amsart}
\usepackage{amssymb}
\usepackage{amsmath}
\usepackage{mathrsfs}
\usepackage{amsfonts}
\usepackage{color}
\usepackage{amsthm}
\usepackage{graphicx}
\newtheoremstyle{remark}
  {}{}{}{}{\bfseries}{.}{.5em}{{\thmname{#1 }}{\thmnumber{#2}}{\thmnote{ (#3)}}}

\input{packages}


\input{naming}

\title[Banach-Saks property for Hölder spaces]{The weak Banach-Saks property for Hölder spaces}

\author[Górka]{Prezemysław Górka}
\author[Sanchiz]{Mauro Sanchiz$^{*}$}

\address{Faculty of Mathematics and Information Sciences\\
Warsaw University of Technology\\
Pl. Politechniki 1, 00-661 Warsaw, Poland}
\email{ przemyslaw.gorka@pw.edu.pl}
\address{Departamento de An{\'a}lisis Matem{\'a}tico y Matem\'atica Aplicada, Facultad de Matem{\'a}ticas, Universidad Complutense, 28040 Madrid, Spain}
\email{ msanchiz@ucm.es}
\thanks{$^{*}$ Supported by NAWA under the Ulam postoctoral program 2024/1/00064 and partially supported by the UNED research project: 2025/00145/001 ``Operadores, retículos y estructura de espacios de Banach''}
\keywords {Hölder spaces; Banach-Saks property.}
\subjclass[2020]{
46B26,  	
46E30, 
 46E15. 
}

\begin{document}

\begin{abstract}
We investigate the weak Banach--Saks property in the setting of H\"older spaces over metric spaces. We show that, for every infinite metric space $(M,d)$ and every $\alpha \in (0,1]$, the H\"older space $C^{\alpha}(M)$ fails to have the weak Banach--Saks property.

\end{abstract}

\maketitle 

\bigskip
\section{Introduction}
A Banach space $X$ is said to have the \emph{weak Banach--Saks property} if every weakly convergent sequence $x_n \to x$ in $X$ contains a subsequence $(x_{n_k})$ whose Ces\`aro means converge in norm to $x$, that is,
\[
\Big\| \frac{1}{N} \sum_{k=1}^N x_{n_k} - x \Big\|_X \longrightarrow 0 
\quad \text{as } N \to \infty.
\]

In their seminal paper \cite{banach_sur_1930}, Banach and Saks proved that the spaces $L_p[0,1]$ (for $1 < p < \infty$) and $\ell_p$ possess the weak Banach--Saks property. They also presented an incorrect argument claiming that $L_1[0,1]$ fails to have this property. This was later corrected by Szlenk \cite{szlenk_sur_1965}, who showed that $L_1[0,1]$ is in fact weakly Banach--Saks. Furthermore, Schreier \cite{schreier_gegenbeispiel_1930} proved that $C[0,1]$ does not have the weak Banach--Saks property. Finally, Farnum \cite{farnum_banach-saks_1974} established that for a compact metric space $M$, the space $C(M)$ is weakly Banach--Saks if and only if
\[
M^{(\omega)} = \emptyset, 
\quad \text{where } 
M^{(\omega)} = \bigcap_{n=1}^{\infty} M^{(n)},
\]
and $M^{(n)}$ denotes the $n$-th derived set of $M$.

Since then, occasional works on the weak Banach--Saks property have appeared in different context on Banach spaces. More recently, quantifications of the Banach--Saks properties have been studied in \cite{bendova_quantification_2015, silber_quantification_2023}.

The main objective of this paper is to show that, for every infinite metric space $M$, the H\"older space $C^{\alpha}(M)$ does not have the weak Banach--Saks property. This result complements the existing characterizations for spaces of continuous functions and provides a natural extension of earlier work to the H\"older setting.

The paper is organized as follows. In Section~2, we recall several definitions and auxiliary results. We provide a direct proof that $\ell_\infty$ does not have the weak Banach--Saks property. As corollaries, we obtain characterizations of measure spaces $(X,\Sigma,\mu)$ and metric spaces $(M,d)$ for which $L_\infty(X,\Sigma,\mu)$ and $C_b(M)$, respectively, are weakly Banach--Saks. Section~3 contains the proof that $C^{\alpha}(M)$ has the weak Banach--Saks property if and only if the metric space $(M,d)$ is finite.

\section{Preliminaries}
Let $(M,d)$ be a metric space, and let $C_b(M)$ denote the space of all bounded continuous functions on $M$. We equip $C_b(M)$ with the supremum norm
\[
\|f\|_{C(M)} := \sup_{x \in M} |f(x)|.
\]
Fix $0 < \alpha \le 1$. A function $f \in C_b(M)$ is said to be \emph{Hölder continuous of order $\alpha$} if
\[
\rho_{\alpha, M}(f) := \sup_{\substack{x,y \in M \\ x \neq y}} 
\frac{|f(x) - f(y)|}{d(x,y)^\alpha} < \infty.
\]
We define
\[
C^\alpha(M) := \{ f \in C_b(M) : \rho_{\alpha, M}(f) < \infty \},
\]
and equip it with the norm
\[
\|f\|_{C^\alpha (M)} := \|f\|_{C(M)} + \rho_{\alpha, M}(f).
\]
The spaces $C_b(M)$ and $C^\alpha(M)$ are Banach spaces. 

By the definition of the completion of a metric space, we obtain the following.
\begin{prop} \label{iso}
    Let $(M,d)$ be a metric space, and let $(\widehat{M},\widehat{d})$ be its completion. Fix $\alpha \in (0,1]$. Then the following $C^\alpha(\widehat{M}) \cong C^\alpha(M)$ isometric isomorphism holds.
\end{prop}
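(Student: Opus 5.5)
The natural candidate for the isometry is the restriction map $R\colon C^\alpha(\widehat{M}) \to C^\alpha(M)$, $R(F) = F|_M$, where we regard $M$ as a dense subset of $\widehat{M}$ with $\widehat{d}|_{M\times M} = d$. The plan has three steps: show $R$ is well defined and isometric, then show it is surjective by extending Hölder functions from $M$ to $\widehat{M}$. Linearity of $R$ is clear.

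\emph{$R$ is isometric.} Fix $F\in C^\alpha(\widehat{M})$. Since $M\subset \widehat{M}$, we immediately get $\|F|_M\|_{C(M)} \le \|F\|_{C(\widehat M)}$ and $\rho_{\alpha,M}(F|_M)\le \rho_{\alpha,\widehat M}(F)$. For the reverse inequalities we use density of $M$ and continuity of $F$. Given $\xi\in\widehat M$, choose $x_n\in M$ with $x_n\to\xi$; then $|F(\xi)| = \lim |F(x_n)| \le \|F|_M\|_{C(M)}$. Given $\xi\ne\eta$ in $\widehat M$, choose $x_n\to\xi$ and $y_n\to\eta$ in $M$. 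Then $x_n\neq y_n$ for large $n$, and $\widehat d(x_n,y_n)\to \widehat d(\xi,\eta)>0$, so
\[
\frac{|F(\xi)-F(\eta)|}{\widehat d(\xi,\eta)^\alpha}=\lim_n \frac{|F(x_n)-F(y_n)|}{d(x_n,y_n)^\alpha}\le \rho_{\alpha,M}(F|_M).
\]
Hence $\|R F\|_{C^\alpha(M)} = \|F\|_{C^\alpha(\widehat M)}$, and in particular $RF\in C^\alpha(M)$, so $R$ is well defined.

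\emph{$R$ is surjective.} Take $f\in C^\alpha(M)$. The Hölder condition $|f(x)-f(y)|\le \rho_{\alpha,M}(f)\, d(x,y)^\alpha$ makes $f$ uniformly continuous on $M$. Consequently, whenever $(x_n)\subset M$ is Cauchy, the sequence $(f(x_n))$ is Cauchy in $\mathbb{R}$ (or $\mathbb{C}$). So for $\xi\in\widehat M$ we may define $F(\xi):=\lim f(x_n)$ along any sequence $x_n\to\xi$ in $M$. This is independent of the chosen sequence: interlacing two approximating sequences gives a single convergent sequence, hence the same limit. Constant sequences show $F|_M=f$.

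Passing to limits in the bounds $|f(x_n)|\le\|f\|_{C(M)}$ and $|f(x_n)-f(y_n)|\le\rho_{\alpha,M}(f)\,d(x_n,y_n)^\alpha$ yields $F$ bounded and $\alpha$-Hölder on $\widehat M$. In particular $F$ is continuous, so $F\in C^\alpha(\widehat M)$ and $RF=f$.

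The only point requiring any care is this extension step. One must use uniform continuity, which here comes directly from the Hölder estimate, rather than mere continuity: merely continuous functions need not extend to the completion. Everything else is routine limit-passing.
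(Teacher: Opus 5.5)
Your proof is correct and follows essentially the same route as the paper: use the restriction map (composition with the isometric embedding), note that it does not increase norms, and get surjectivity from the unique extension of a H\"older (hence uniformly continuous) function from the dense set $M$ to $\widehat{M}$, which keeps the same sup bound and H\"older constant. The only cosmetic difference is that you prove the reverse inequality $\|F\|_{C^\alpha(\widehat M)}\le\|F|_M\|_{C^\alpha(M)}$ directly by density and continuity, whereas the paper deduces it from injectivity together with the bounds on the extension.
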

\begin{proof}
Let $(\widehat{M},\widehat{d})$ denote the completion of $(M,d)$, and let 
$i: M \to \widehat{M}$ be an isometric embedding with dense image. 
Define the operator
\[
I: C^{\alpha}(\widehat{M}) \to C^{\alpha}(M)
\]
by
\[
I(f)(x) = f(i(x)), \qquad \text{for all } f \in C^{\alpha}(\widehat{M}),\ x \in M.
\]

It is immediate that $I$ is linear and injective. Moreover, for every 
$f \in C^{\alpha}(\widehat{M})$,
\[
\|I(f)\|_{C^{\alpha}(M)} \le \|f\|_{C^{\alpha}(\widehat{M})}.
\]

We now prove that $I$ is surjective. Let $g \in C^{\alpha}(M)$ be given and define 
$\widetilde{f}: i(M) \to \mathbb{R}$ by
\[
\widetilde{f}(\hat{x}) = g(i^{-1}(\hat{x})).
\]
For any $\hat{x}, \hat{y} \in i(M)$ we obtain
\[
|\widetilde{f}(\hat{x}) - \widetilde{f}(\hat{y})|
= |g(i^{-1}(\hat{x})) - g(i^{-1}(\hat{y}))|
\le \rho_{\alpha,M}(g)\,d^{\alpha}(i^{-1}(\hat{x}),i^{-1}(\hat{y}))
=\rho_{\alpha,M}(g)\,\widehat{d} ^{\alpha}(\hat{x},\hat{y}),
\]
and
\[
|\widetilde{f}(\hat{x})| \le \|g\|_{C(M)}.
\]
Thus, $\widetilde{f}$ is Hölder continuous on the dense subset 
$i(M) \subset \widehat{M}$ with the same Hölder seminorm and supremum bound as $g$.
Since $i(M)$ is dense in $\widehat{M}$, the function $\widetilde{f}$ admits 
a unique continuous extension $f: \widehat{M} \to \mathbb{R}$. 
Furthermore, this extension satisfies
\[
\rho_{\alpha,\widehat{M}}(f) \le \rho_{\alpha,M}(g),
\qquad
\|f\|_{C(\widehat{M})} \le \|g\|_{C(M)}.
\]
Hence $f \in C^{\alpha}(\widehat{M})$ and, by construction, $I(f)=g$. 
Consequently,
\[
\|f\|_{C^{\alpha}(\widehat{M})}
\le
\|I(f)\|_{C^{\alpha}(M)}.
\]
This completes the proof.
\end{proof}

Let \(X\) and \(Y\) be Banach spaces with norms \(\|\cdot\|_X\) and \(\|\cdot\|_Y\), respectively. 
We say that \(Y\) embeds into \(X\), denoted \(Y \hookrightarrow X\), if there exists a linear map 
\(T: Y \to X\) and a constant \(C > 0\) such that
\[
C^{-1} \|y\|_Y \leq \|Ty\|_X \leq C \|y\|_Y \quad \text{for all } y \in Y.
\]

\begin{prop}\label{propemb}
Let \(X\) and \(Y\) be Banach spaces. 
If \(X\) is weakly Banach-Saks and \(Y \hookrightarrow X\), 
then \(Y\) is also weakly Banach-Saks.
\end{prop}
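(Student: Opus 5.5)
Let $T\colon Y\to X$ be the embedding, with $C^{-1}\|y\|_Y\le \|Ty\|_X\le C\|y\|_Y$. The plan is to push a weakly convergent sequence from $Y$ into $X$ via $T$, use the weak Banach--Saks property there, and pull the Cesàro convergence back through the lower estimate. Fix a sequence $(y_n)$ in $Y$ with $y_n\to y$ weakly; we must find a subsequence whose Cesàro means converge in norm to $y$.

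First, $Ty_n\to Ty$ weakly in $X$. Indeed, $T$ is linear and bounded by the upper estimate. For any $x^*\in X^*$ the functional $x^*\circ T$ lies in $Y^*$, so $x^*(Ty_n)=(x^*\circ T)(y_n)\to (x^*\circ T)(y)=x^*(Ty)$. Since $X$ is weakly Banach--Saks, there is a subsequence $(y_{n_k})$ with
\[
\Big\|\frac1N\sum_{k=1}^N Ty_{n_k}-Ty\Big\|_X\longrightarrow 0 \quad\text{as } N\to\infty .
\]

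Second, by linearity of $T$ the expression inside the norm equals $T\big(\frac1N\sum_{k=1}^N y_{n_k}-y\big)$. The lower estimate then gives
\[
\Big\|\frac1N\sum_{k=1}^N y_{n_k}-y\Big\|_Y\le C\,\Big\|\frac1N\sum_{k=1}^N Ty_{n_k}-Ty\Big\|_X\longrightarrow 0,
\]
which is exactly the weak Banach--Saks conclusion for $(y_n)$ in $Y$.

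There is no real obstacle here. The only point that needs a moment's care is that bounded linear maps are weak-to-weak continuous, which is the composition-of-functionals argument above. Note that surjectivity of $T$ and closedness of its range are never used, and neither is any Hahn--Banach extension.
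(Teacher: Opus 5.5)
Your proof is correct and follows the same route as the paper: push the weakly convergent sequence into $X$ via $T$, extract a Cesàro-convergent subsequence there, and pull the norm convergence back using the lower estimate $\|y\|_Y\le C\|Ty\|_X$. You also spell out the weak-to-weak continuity of $T$ via $x^*\circ T\in Y^*$, which the paper only asserts.
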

\begin{proof}
Let \(T: Y \to X\) be a linear operator. Assume that there exists a constant \(C>0\) such that
\begin{eqnarray}\label{zbi}
C^{-1}\|y\|_{Y} \le \|Ty\|_{X} \le C\|y\|_{Y}
\quad \text{for all } y \in Y.
\end{eqnarray}
Let \((y_n)\subset Y\) be a sequence converging weakly to \(y \in Y\). By linearity and continuity of \(T\), it follows that 
\(T(y_n) \rightarrow T(y)\) weakly in \(X\).
Since \(X\) has the weak Banach--Saks property, there exists a subsequence 
\((y_{n_k})\) such that
\[
\frac{1}{N}\sum_{k=1}^{N} T(y_{n_k}) - T(y)
\longrightarrow 0
\quad \text{in } X.
\]
Using the first inequality in \eqref{zbi}, we deduce that
\[
\frac{1}{N}\sum_{k=1}^{N} y_{n_k} - y
\longrightarrow 0
\quad \text{in } Y.
\]
This completes the proof.

\end{proof}
\begin{thm}\label{zan}
    $\ell_\infty$ is not weakly Banach-Saks.
\end{thm}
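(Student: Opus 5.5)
We will exhibit an explicit weakly null sequence in $\ell_\infty$, built from the Schreier family, such that no subsequence has norm-null Ces\`aro means. It is not enough to embed $c_0$ or $C[0,1]$ and cite Schreier: the weak Banach--Saks property does not pass up to superspaces, and Proposition~\ref{propemb} only transfers the property downward. So we argue directly in $\ell_\infty$.

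\textbf{Construction.} Recall the Schreier family: $\mathcal{S}$ is the set of finite sets $F\subset\mathbb{N}$ with $|F|\le\min F$. Since $\mathcal{S}$ is countable, fix an enumeration $\mathcal{S}=\{F_1,F_2,\dots\}$. For each $n\in\mathbb{N}$ define
\[
x_n=(\mathbf 1_{F_j}(n))_{j\ge1}\in\ell_\infty,
\]
so $x_n(j)=1$ if $n\in F_j$ and $0$ otherwise, and $\|x_n\|_\infty\le 1$.

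\textbf{Weak nullity.} The key step is to show $x_n\to0$ weakly. Since $\ell_\infty^*$ is not $\ell_1$, test against arbitrary functionals via the following standard fact: a bounded sequence $(x_n)$ in $\ell_\infty$ is weakly null as soon as every subsequence $(x_{n_i})$ has $\sup_N\bigl\|\sum_{i\le N}x_{n_i}\bigr\|_\infty<\infty$. This holds because such a subsequence spans a copy of $c_0$ (the bound on sums makes $x_{n_i}$ dominated by the $c_0$ basis), and in $c_0$ the basis is weakly null. A cleaner route is to argue by contradiction. If $\varphi(x_{n_i})\ge\varepsilon$ for some $\varphi\in\ell_\infty^*$ and some subsequence, then $\varphi\bigl(\sum_{i\le N}x_{n_i}\bigr)\ge N\varepsilon$, which forces $\bigl\|\sum_{i\le N}x_{n_i}\bigr\|\ge N\varepsilon/\|\varphi\|$.

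For our sequence, however, the sums are not bounded: $\sum_{i\le N}x_{n_i}$ evaluated at the coordinate indexing $\{n_1,\dots,n_N\}$ can reach $N$. So we need a finer argument, and this is the main obstacle. Use Rosenthal's $\ell_1$ theorem on pointwise limits, or directly the identification of $\ell_\infty$ with $C(\beta\mathbb{N})$. Each $x_n$ is a continuous function on $\beta\mathbb{N}$ with values in $\{0,1\}$, so weak nullity is equivalent to pointwise convergence to $0$ on $\beta\mathbb{N}$, by dominated convergence for Radon measures. For an ultrafilter $u$, the value $x_n(u)=1$ exactly when $\{j: n\in F_j\}\in u$. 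Suppose infinitely many $n$ had $A_n:=\{j:n\in F_j\}\in u$. Choose $n_1<\dots<n_{k}$ with $k>n_1$ among them. Then $\bigcap_{i\le k}A_{n_i}\in u$, so this intersection is nonempty. But any $j$ in it has $F_j\supset\{n_1,\dots,n_k\}$, which has more than $\min F_j$ elements, contradicting $F_j\in\mathcal{S}$. Hence $x_n(u)\to0$ for every $u\in\beta\mathbb{N}$, and $x_n\to0$ weakly.

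\textbf{Failure of Banach--Saks.} Given any subsequence $(x_{n_k})$ and $N\ge1$, the set $F=\{n_{N+1},\dots,n_{2N}\}$ satisfies $|F|=N\le n_{N+1}=\min F$, so $F=F_j\in\mathcal{S}$ for some $j$. Evaluating at coordinate $j$,
\[
\Bigl\|\frac1{2N}\sum_{k=1}^{2N}x_{n_k}\Bigr\|_\infty\ge\frac{N}{2N}=\frac12 .
\]
Thus the Ces\`aro means of $(x_{n_k})$ along the indices $2N$ stay at norm at least $\tfrac12$ and do not converge to $0$ in norm. Since the subsequence was arbitrary, $\ell_\infty$ is not weakly Banach--Saks.
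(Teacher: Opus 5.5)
Your proof is correct. The sequence (indicators of an enumerated Schreier family) and the failure of Ces\`aro convergence (evaluate at the coordinate indexing $\{n_{N+1},\dots,n_{2N}\}$) are essentially the paper's. The genuine difference is in the weak-nullity step.

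The paper uses only maximal Schreier sets $\{A:|A|=\min A\}$ and verifies a combinatorial criterion for weak convergence in $\ell_\infty$ quoted from Toland's book. Given increasing $(k_j),(i_n),(J_n)$, it picks $n>k_1$ and notes that a maximal Schreier set containing $k_1$ has at most $k_1<J_n$ elements, so some $u_{k_j}(i_n)$ with $j\le J_n$ vanishes. You instead identify $\ell_\infty$ with $C(\beta\mathbb{N})$. Riesz representation and dominated convergence then reduce weak nullity of a bounded sequence to pointwise nullity at every ultrafilter, and the finite intersection property gives the same counting contradiction.

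Your route rests only on standard theorems and shows clearly what is going on: weak nullity says that every ultrafilter limit of Schreier sets in $2^{\mathbb{N}}$ is finite, i.e.\ the Schreier family is compact. The paper's route stays purely combinatorial inside $\mathbb{N}$, at the price of quoting a less familiar criterion. Working with the whole Schreier family also spares you the paper's extra step of enlarging $\{k_{N+1},\dots,k_{2N}\}$ (its notation) to the maximal set $A_N$.

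One correction concerns your opening sentence: it is false that embedding $C[0,1]$ and citing Schreier is not enough. The contrapositive of Proposition~\ref{propemb} says that if a space failing the weak Banach--Saks property embeds into $X$, then $X$ fails it too. So \emph{failure} does pass to superspaces. Since $C[0,1]$ is separable, it embeds isometrically into $\ell_\infty$, e.g.\ via $f\mapsto (f(q_n))_n$ for a dense sequence $(q_n)$ in $[0,1]$. This is exactly the short argument the paper records before its direct proof. Your remark is right only for $c_0$, and for a different reason: $c_0$ is itself weakly Banach--Saks.

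Likewise, the false start in your weak-nullity paragraph can simply be deleted. That is the bounded-partial-sums criterion, with its unjustified ``spans a copy of $c_0$'' explanation, and it plays no role in the argument.
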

\begin{proof}
Although the theorem can be deduced using advanced results, we present a more direct approach for clarity. 
Indeed, since \(C[0,1]\) is separable, Theorem 2.5.7 in \cite{bookA} provides an embedding $
C[0,1] \hookrightarrow \ell_\infty,$
and, by Schreier's Theorem \cite{schreier_gegenbeispiel_1930}, the space \(C[0,1]\) is not weakly Banach-Saks. 
Consequently, Proposition~\ref{propemb} implies that \(\ell_\infty\) is not weakly Banach-Saks. 
While this argument is valid, it relies on relatively heavy machinery; 
to provide a self-contained and more transparent exposition, we shall give the following direct proof following the ideas of Schreier and Farnum.

Let $\mathcal{A} \subset 2^{\mathbb{N}}\setminus \{\emptyset\}$ be the family of maximal Schreier sets \cite{BEANLAND_GOROVOY_HODOR_HOMZA_2024}, defined by
\[
    \mathcal{A}=\{A \subset \mathbb{N}: \vert A\vert=\min(A) \}\setminus\{\emptyset\}.
\] Clearly, $\mathcal{A}$ is a numerable set. Take $T:\mathbb{N}\rightarrow\mathcal{A}$ an enumeration of $\mathcal{A}$, and define the sequence $(u_k)$ in $\ell_\infty$ as follows
$$
u_k(i):=\begin{cases}
    1 \text{ if } k\in T(i)\\
    0 \text{ otherwise.}
\end{cases}
$$
\begin{lemma} \label{jeden}
    The sequence $(u_k)$ is weakly converging in $\ell_\infty$ to $0$.
\end{lemma}
\begin{proof}
We shall need the following criterion for weak convergence in $\ell_{\infty}$.
\begin{prop}[\cite{toland_dual_2020}, Corollary 8.11, rewritten]
    Let $(u_k)$ be a sequence in $\ell_{\infty}$ with $u_k(i) \rightarrow 0$ as $k \rightarrow \infty$ for each $i \in \mathbb{N}$.
    If for every $\alpha>0$ and for all strictly increasing natural sequences $(k_j)$, $(i_n)$ and $(J_n)$, there exists $n\in\mathbb{N}$ and $j\in\{1,2,...,J_n\}$ such that $\vert u_{k_j}(i_n)\vert\leq \alpha$, then $u_k \rightarrow 0$ weakly in $\ell_{\infty}$.
\end{prop}
    To apply the above proposition we fix $\alpha>0$, and strictly increasing sequences of natural numbers $(k_j)$, $(i_n)$ and $(J_n)$. Take $n>k_1$. If $k_1\not\in T(i_n)$, then $u_{k_1}(i_n)=0<\alpha$ and the proof ends, so suppose that $k_1\in T(i_n)$. This forces $\vert T(i_n)\vert=\min(T(i_n))\leq k_1$. Now, since $k_1<n\leq J_n$ and $k_1<k_2<...<k_{J_n}$, there must exist at least some $j\in\{1,...,J_n\}$ such that $k_j\not\in T(i_n)$.
    Hence, $\vert u_{k_j}(i_n)\vert=0<\alpha$. This concludes that $u_k\rightarrow0$ weakly in $\ell_{\infty}$.
\end{proof}

\begin{lemma}\label{dwa}
    The sequence $(u_k)$ has no subsequences whose Cèsaro means converge in $\ell_\infty$ to $0$. 
\end{lemma}

\begin{proof}
        Let us fix any strictly increasing sequence $(k_j)$ of natural numbers,
    observe that $k_{N+1}>N$. Let 
    \[
    A_N:=\{k_{N+1},...,k_{N+N},k_{N+N+1},...,k_{{N+k_{N+1}}}\}.
    \]
    Then the set $\{k_{N+1}, k_{N+2},..., k_{N+N}\}$ is contained in $A_N$. Since $A_N \in \mathcal{A}$, there exists $i_0$ with $T(i_0)=A$, and in consequence $u_{k_j}(i_0)=1$ for every $j\in\{N+1,...,N+N\}$. This leads to
    $$
    \left\| \frac{1}{2 N} \sum_{j=1}^{2N} u_{k_j}\right\|_{\ell_\infty}\geq\frac{1}{2 N} \sum_{j=1}^{2N} u_{k_j}(i_0)\geq \frac{1}{2 N} \sum_{j=N+1}^{2N} u_{k_j}(i_0)=\frac{1}{2}.
    $$
    In conclusion, since for any $N$ we have $\lVert \frac{1}{2 N} \sum_{j=1}^{2N} u_{k_j}\rVert_{\ell_\infty}\geq \frac{1}{2}$, the subsequence $(u_{k_j})$ is not Cèsaro null.
\end{proof}
Now, by Lemma \ref{jeden} and Lemma \ref{dwa} the proof of the theorem follows.
\end{proof}
As corollaries, we get the following two results.
\begin{thm} Let $(X,\Sigma,\mu)$ be measure space, then $L_\infty(X,\Sigma,\mu)$ is not weakly Banach-Saks if and only if there exists a sequence $(A_n)$ of disjoint measurable sets of positive measure.
\end{thm}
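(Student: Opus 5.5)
We prove the two implications separately. The engine is Theorem~\ref{zan} combined with Proposition~\ref{propemb}. For the ``only if'' direction we use a finite-dimensionality argument: if no such sequence exists, the measure algebra modulo null sets is essentially finite.

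\emph{Sufficiency.} Suppose $(A_n)$ is a sequence of pairwise disjoint measurable sets with $\mu(A_n)>0$. Define
\[
T:\ell_\infty\to L_\infty(X,\Sigma,\mu),\qquad T(a)=\sum_{n=1}^\infty a_n \chi_{A_n},
\]
where the sum is pointwise, which is fine since the $A_n$ are disjoint. Then $T$ is linear. Because the sets are disjoint and each has positive measure, the essential supremum of $|T(a)|$ equals $\sup_n |a_n|$. Indeed, on $A_n$ the function equals $a_n$ on a set of positive measure, and $T(a)$ vanishes off $\bigcup A_n$. Hence $\|Ta\|_{L_\infty}=\|a\|_{\ell_\infty}$, so $\ell_\infty\hookrightarrow L_\infty(X,\Sigma,\mu)$ with $C=1$. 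If $L_\infty$ were weakly Banach--Saks, Proposition~\ref{propemb} would make $\ell_\infty$ weakly Banach--Saks, contradicting Theorem~\ref{zan}.

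\emph{Necessity.} Suppose no infinite disjoint sequence of sets of positive measure exists. We will show that $L_\infty$ is finite-dimensional, and hence weakly Banach--Saks, because weak and norm convergence coincide there and the Cesàro means of a norm-convergent sequence converge. This step is the main obstacle. The plan is:
\begin{enumerate}
\item Call a set $E$ of positive measure an \emph{atom} if every measurable $F\subset E$ satisfies $\mu(F)=0$ or $\mu(E\setminus F)=0$.
\item Show that every set of positive measure contains an atom. Otherwise we could split repeatedly: write $E=F_1\cup E_1$ with both pieces of positive measure, then split $E_1$, and so on. This produces infinitely many disjoint sets of positive measure, which is excluded.
\item Choose a maximal family of pairwise disjoint atoms (up to null sets). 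By hypothesis this family is finite, say $E_1,\dots,E_m$.
\item Let $R=X\setminus\bigcup_i E_i$. Then $R$ must be null: if $\mu(R)>0$, step 2 gives an atom inside $R$, contradicting maximality.
\item On an atom, every measurable function is a.e.\ constant. To see this, note that $\{f>t\}\cap E$ is null or conull in $E$ for each $t$; take $c$ to be the supremum of the $t$ for which it is conull.
\end{enumerate}
It follows that $L_\infty=\operatorname{span}\{\chi_{E_1},\dots,\chi_{E_m}\}$, which is finite-dimensional.

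Some care is needed with measures that are not $\sigma$-finite or that take the value $\infty$. For example, an atom $E$ with $\mu(E)=\infty$ may have subsets of positive measure whose complements in $E$ are also positive. The splitting argument in step 2 uses only whether measures are positive or null, not their finiteness. The argument in step 5 uses only the null/conull dichotomy, so it also goes through. If $m=0$, then $\mu\equiv0$ on the relevant sets, $L_\infty=\{0\}$, and the statement is trivially true.
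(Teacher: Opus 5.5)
Your proof is correct and follows the same route as the paper. For one direction you use the isometric copy $a\mapsto\sum_n a_n\chi_{A_n}$ of $\ell_\infty$ together with Theorem~\ref{zan} and Proposition~\ref{propemb}; for the other you decompose $X$, up to a null set, into finitely many atoms on which every function is a.e.\ constant. Your steps 2--4 supply the justification for that finite atomic decomposition, which the paper simply asserts.

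There is one slip. You remark that an atom of infinite measure ``may have subsets of positive measure whose complements in $E$ are also positive''. That is impossible under your null/conull definition of atom; it describes the weaker condition $\mu(F)\in\{0,\mu(E)\}$. The remark is not used anywhere in your argument, so the proof is unaffected.
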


\begin{proof}

    $(\Longrightarrow)$ By Proposition~\ref{propemb} and Theorem~\ref{zan}, it is sufficient to prove that $\ell_\infty \hookrightarrow L_\infty(X,\Sigma,\mu).$

Let \((A_n)\) be a sequence of pairwise disjoint measurable sets of positive measure. 
Define an operator \( T : \ell_\infty \to L_\infty(X,\Sigma,\mu) \) by
\[
T\big(a\big) = \sum_{n=1}^{\infty} a(n) \chi_{A_n}, 
\qquad a \in \ell_\infty.
\]
Then \(T\) is a linear isometry. Consequently, \(\ell_\infty\) embeds isometrically 
into \(L_\infty(X,\Sigma,\mu)\), as claimed.

    $(\Longleftarrow)$ Assume, for the sake of contradiction, that the statement is false. 
Then there exists a natural number \( N \in \mathbb{N} \) and measurable, 
pairwise disjoint sets \( A_1, \dots, A_N \) of positive measure such that $
\mu\left( X \setminus \bigcup_{i=1}^{N} A_i \right) = 0,$
and, moreover, for every \( i \in \{1, \dots, N\} \) and every measurable subset 
\( A \subset A_i \), we have $\mu(A)\,\mu(A_i \setminus A) = 0.$
Then any $f \in L_\infty(X, \Sigma, \mu)$ must be constant almost everywhere on each $A_i$ for $i \in \{1, \dots, N\}$. 
Indeed, if $f \in L_\infty(X,\Sigma,\mu)$ were not constant on $A_i$ for some $i$, then $\operatorname{ess\,sup}_{A_i} f > \operatorname{ess\,inf}_{A_i} f,$
which yields two disjoint measurable subsets of $A_i$ of positive measure — a contradiction. Hence every $f$ is constant on each $A_i$. Thus $L_\infty(X,\Sigma,\mu)$ is finite-dimensional, and therefore it has the weak Banach--Saks property.
\end{proof}
\begin{thm}Let $(M,d)$ be a metric space. Then the following statements hold:
\begin{enumerate}
    \item If $(M,d)$ is compact, the space $C_b(M)$ is weakly Banach--Saks if and only if $M^{(\omega)} = \emptyset$.
    \item If $(M,d)$ is not compact, the space $C_b(M)$ is not weakly Banach--Saks.
\end{enumerate}
\end{thm}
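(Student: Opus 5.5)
For part (1) we note that when $M$ is compact every continuous function is bounded, so $C_b(M)=C(M)$ with the same supremum norm. The equivalence is then exactly Farnum's theorem \cite{farnum_banach-saks_1974}, and nothing more needs to be proved.

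For part (2) the plan is to build an isomorphic embedding $\ell_\infty \hookrightarrow C_b(M)$. Proposition~\ref{propemb} together with Theorem~\ref{zan} then shows that $C_b(M)$ is not weakly Banach--Saks.

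\textbf{Step 1: separated points.} Since $M$ is not compact, it contains a sequence $(x_n)$ with no convergent subsequence. Passing to a subsequence, we may assume the $x_n$ are distinct and that there are radii $r_n>0$ for which the balls $B(x_n,2r_n)$ are pairwise disjoint. More precisely, we want the family $\{B(x_n,r_n)\}$ to be uniformly discrete in the sense that the distance from $x_m$ to $B(x_n,r_n)$ exceeds $r_m$ for $m\neq n$.

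To get this, first note that the set $\{x_n\}$ has no accumulation point in $M$. Hence for each $n$ the number $\delta_n:=\inf_{m\neq n} d(x_n,x_m)$ is positive: otherwise some $x_{m_j}$ would converge to $x_n$, and a subsequence would converge. Take $r_n:=\delta_n/3$. If $B(x_n,r_n)$ and $B(x_m,r_m)$ met, we would get $d(x_n,x_m)<r_n+r_m\le \tfrac{2}{3}\max(\delta_n,\delta_m)\le \tfrac23 d(x_n,x_m)$, which is a contradiction. So these balls are pairwise disjoint.

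\textbf{Step 2: bump functions.} Define
\[
\varphi_n(x)=\max\Bigl(0,1-\tfrac{d(x,x_n)}{r_n}\Bigr).
\]
Each $\varphi_n$ is continuous, takes values in $[0,1]$, equals $1$ at $x_n$, and is supported in $B(x_n,r_n)$. For $a\in\ell_\infty$ set $Ta:=\sum_n a(n)\varphi_n$. Because the supports are disjoint, the sum is pointwise finite (at most one nonzero term), and $\|Ta\|_{C(M)}=\sup_n|a(n)|=\|a\|_{\ell_\infty}$. So $T$ is a linear isometry, provided $Ta$ is continuous.

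\textbf{Main obstacle: continuity of $Ta$.} This is the only delicate point, since it is here that local finiteness of the family of supports is needed. Let $x\in M$. We claim some neighbourhood of $x$ meets at most one support $B(x_n,r_n)$.

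Suppose not. Then there are $y_j\to x$ with $y_j\in B(x_{n_j},r_{n_j})$, where the indices $n_j$ are distinct. Since $r_{n_j}\le \delta_{n_j}/3$, we have $d(x_{n_j},y_j)<r_{n_j}$.

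If $r_{n_j}\to0$ along a subsequence, then $x_{n_j}\to x$, contradicting the fact that $(x_n)$ has no convergent subsequence.

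Otherwise $r_{n_j}\ge c>0$ for all large $j$. For large $i,j$ the points $y_i,y_j$ are within $c/2$ of each other. Then
\[
d(x_{n_i},x_{n_j})< r_{n_i}+r_{n_j}+c/2 \le 3\max(r_{n_i},r_{n_j}) \le \max(\delta_{n_i},\delta_{n_j}) \le d(x_{n_i},x_{n_j}),
\]
which is a contradiction.

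Hence $Ta$ is locally a single continuous bump, so it is continuous. We conclude $\ell_\infty\hookrightarrow C_b(M)$ isometrically, and the result follows from Proposition~\ref{propemb} and Theorem~\ref{zan}.
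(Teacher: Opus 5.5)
Your proposal is correct and follows the paper's route: part (1) is Farnum's theorem, and for part (2) you embed $\ell_\infty$ isometrically into $C_b(M)$ using disjointly supported bumps $\varphi_n$ centred at a sequence with no convergent subsequence, then apply Theorem~\ref{zan} and Proposition~\ref{propemb}. The paper instead takes radii $\varepsilon_n\to 0$, which makes local finiteness of the supports (and hence continuity of $Ta$) immediate, whereas your radii $\delta_n/3$ need your extra two-case argument; that argument actually proves local finiteness rather than your stated ``at most one support'' claim, but local finiteness is all continuity requires (and the stronger claim also holds, since the closed balls $\{y: d(y,x_n)\le r_n\}$ are pairwise disjoint as well).
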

\begin{proof}
Since $(1)$ is just the main result of Farnum \cite{farnum_banach-saks_1974} we shall prove $(2)$. Since $(M,d)$ is not compact, there exists a sequence $(x_n,\varepsilon_n) \subset X \times (0,\infty)$ such that the sequence $(x_n)$ has no convergent subsequence, $\varepsilon_n \to 0$, and
\[
B(x_i,\varepsilon_i) \cap B(x_j,\varepsilon_j) = \emptyset 
\quad \text{for all } i \neq j.
\]
For each $n \in \mathbb{N}$, let $\varphi_n : M \to [0,1]$ be a continuous function satisfying the following conditions\footnote{For example, one may define $\varphi_n(x)=\max\left\{1-\frac{d(x,x_n)}{\varepsilon_n},\,0\right\}$.}
\[
\varphi_n(x_n) = 1 
\quad \text{and} \quad 
\varphi_n \equiv 0 \text{ on } M \setminus B(x_n,\varepsilon_n).
\]
Define the linear operator $T \colon \ell_\infty \to C_b(M)$ by
\[
T\big(a\big) = \sum_{n=1}^{\infty} a(n) \varphi_n, 
\qquad a \in \ell_\infty.
\]
It follows from the properties of the sequence $(x_n,\varepsilon_n)$ that, for every $a \in \ell_\infty$, the map $x \mapsto T(a)(x)$ is continuous. Since $T$ is a linear isometry, it follows from Theorem~\ref{zan} and Proposition~\ref{propemb} that $C_b(M)$ fails to be weakly Banach--Saks.

\end{proof}

\section{Main result}

\begin{thm} Let $(M,d)$ be a metric space and $\alpha \in (0,1]$, then $C^\alpha(M)$ is weakly Banach-Saks if and only if $M$ is finite.
\end{thm}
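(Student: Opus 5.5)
The easy direction is immediate. If $M$ is finite, then $C^\alpha(M)$ is finite-dimensional: it is the space of all real functions on $M$, with a norm equivalent to the supremum norm. In a finite-dimensional space weak and norm convergence coincide, so every weakly convergent sequence already converges in norm, and so do its Ces\`aro means. The whole work is the converse. By Proposition~\ref{propemb} and Theorem~\ref{zan}, it suffices to show that $\ell_\infty \hookrightarrow C^\alpha(M)$ whenever $M$ is infinite.

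To build the embedding I would first extract a sequence of distinct points $(x_n)$ with pairwise disjoint balls $B(x_n, r_n)$ that are ``well separated''. Concretely, I want $d(x_n, x_m) \ge r_n + r_m$ for $n \neq m$; taking $r_n$ to be a third of the relevant gap gives such a margin. Since $M$ is infinite, either it has an infinite uniformly discrete subset, or, failing that, it is totally bounded, and then its completion (which we may pass to by Proposition~\ref{iso}) is compact and infinite. In the compact case there is a sequence converging to some $x^*$ with $d(x_n, x^*)$ strictly decreasing quickly, say $d(x_{n+1},x^*) \le \tfrac14 d(x_n,x^*)$. Then $r_n := \tfrac14 d(x_n,x^*)$ gives disjoint balls with $d(x_n,x_m) \ge 2\max(r_n,r_m)$. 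In the uniformly discrete case I take $r_n = \delta/3$, where $\delta$ is a lower bound on the pairwise distances.

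Next I define bumps $\varphi_n(x) = r_n^{\alpha}\max\{1 - d(x,x_n)/r_n, 0\}^{\alpha}$, or more simply $\varphi_n(x) = \max\{r_n^\alpha - d(x,x_n)^\alpha, 0\}$. These are supported in $B(x_n,r_n)$, have Hölder seminorm at most $1$ (because $t\mapsto t^\alpha$ is $\alpha$-Hölder with constant $1$ and truncation does not increase the seminorm), and satisfy $\varphi_n(x_n) = r_n^\alpha$. Then I set
\[
T(a) = \sum_n a(n)\,\varphi_n / r_n^\alpha \cdot c_n ,
\]
with the normalisation chosen so that the Hölder seminorm of each summand is comparable to $1$. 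The natural choice is $\psi_n := \varphi_n / r_n^{\alpha}$ in the discrete case, and $\psi_n := \varphi_n$ (without normalisation) in the compact case. With $T(a) = \sum_n a(n)\psi_n$, the lower bound $\|T a\| \ge c\|a\|_\infty$ follows by evaluating the seminorm quotient at the pair $(x_n, y)$, where $y$ is a point at distance at least $r_n$ from $x_n$ at which $Ta$ vanishes. In the compact case $x^*$ works: it lies outside every ball, so $|Ta(x_n) - Ta(x^*)|/d(x_n,x^*)^\alpha = |a(n)|\,r_n^\alpha/(4r_n)^\alpha = 4^{-\alpha}|a(n)|$. The upper bound $\|Ta\|_{C^\alpha} \le C\|a\|_\infty$ is the key estimate.

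The main obstacle is this upper bound on $\rho_\alpha(Ta)$ for pairs $x\in B(x_n,r_n)$, $y\in B(x_m,r_m)$ with $n\ne m$. Here I would use $|Ta(x)-Ta(y)| \le \|a\|_\infty(\psi_n(x)+\psi_m(y))$ together with $\psi_n(x) \le \min\{r_n^\alpha, d(x,\partial)\ldots\}$. More precisely, $\psi_n(x) \le (r_n - d(x,x_n))^\alpha$ up to a constant, by concavity of $t^\alpha$. The separation condition then gives $d(x,y) \ge (r_n - d(x,x_n)) + (r_m - d(y,x_m))$, so the quotient is bounded by $2$ using $s^\alpha + t^\alpha \le 2^{1-\alpha}(s+t)^\alpha$. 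This is exactly why I insist on $d(x_n,x_m)\ge r_n+r_m$; in the discrete case the normalised bumps lose only a factor depending on $\delta$. Pairs within the same ball, or with one point outside all balls, are handled by the single-bump estimate. Continuity and boundedness of $Ta$ are clear, so $T$ is an embedding. The conclusion then follows from Theorem~\ref{zan} and Proposition~\ref{propemb}.
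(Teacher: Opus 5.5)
Your argument is correct, but it reaches $\ell_\infty\hookrightarrow C^\alpha(M)$ by a different route from the paper.

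\textbf{How the paper does it.} The paper does not split into cases. It imports Lemma~\ref{ciagi} from the proof of Theorem 3.2 in C\'uth--Doucha--Wojtaszczyk. This gives pairs $(x_n,y_n)$ with pairwise disjoint balls $B_n=B(y_n,Kd(x_n,y_n))$, none of which contains any $x_m$. It then uses the bumps $f_n=\max\{\min\{1,d^\alpha(x_n,y_n)-d^\alpha(\cdot,y_n)/K^\alpha\},0\}$. The truncation at $1$ lets one formula handle both large and small $d(x_n,y_n)$. The lower bound $\|Ta\|_{C^\alpha(M)}\ge\|a\|_\infty$ then comes from the pair $(x_k,y_k)$, through either the sup norm or the seminorm.

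\textbf{How yours differs.} You build the separated family by hand, via the dichotomy ``infinite uniformly discrete subset, or totally bounded with compact infinite completion''. Your compact case is a concrete instance of Lemma~\ref{ciagi}: take $x^*$ as the common partner point of every $x_n$, and $K=1/4$. So your proof is self-contained. It avoids relying on a lemma that, as the paper's footnote admits, is not stated explicitly in the source. The paper's version, in exchange, gives one uniform construction with clean constants.

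\textbf{The separation margin is unnecessary.} You do not need $d(x_n,x_m)\ge r_n+r_m$ or the concavity estimate. Take $x\in B(x_n,r_n)$ and $y\in B(x_m,r_m)$ with $n\neq m$. Then $\psi_n(y)=\psi_m(x)=0$, so
\[
\psi_n(x)=|\psi_n(x)-\psi_n(y)|\le\rho_{\alpha,M}(\psi_n)\,d(x,y)^\alpha ,
\]
and likewise for $\psi_m(y)$. Disjointness of the supports alone therefore gives $\rho_{\alpha,M}(Ta)\le 2\sup_n\rho_{\alpha,M}(\psi_n)\,\|a\|_\infty$. This is exactly the paper's trick.

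\textbf{A slip in the uniformly discrete case.} The lower-bound mechanism you describe does not work as written there. There need not be any point outside all the balls: take $M=\mathbb{N}$ with the $0$--$1$ metric and let $(x_n)$ enumerate $M$. Even when such a point exists, its distance to $x_n$ may be unbounded in $n$, so you get no uniform constant. The fix is immediate: since $\psi_n(x_n)=1$ in that case, use the sup norm, $\|Ta\|_{C(M)}\ge|Ta(x_n)|=|a(n)|$.
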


\begin{proof}
First, we recall the following lemma from \cite{cuth_structure_2016}\footnote{It is not explicitly stated in that article as we write it, but it is proven in the proof of its Theorem 3.2, which mainly shows the existence of points a sequence of points $(x_n,y_n)$ satisfying the assumptions of its Lemma 3.1, the properties we are taking.}.

\begin{lemma}\label{ciagi}
Let $(M,d)$ be an infinite complete metric space. Then there exist a constant $K>0$ and a sequence of pairs of points $(x_n,y_n) \subset M$ such that:
\begin{enumerate}[label=(\roman*)]
    \item $x_n \neq y_n$ for all $n \in \mathbb{N}$;
    \item $x_m \notin B(y_n, K \, d(x_n,y_n))$ for all $n,m \in \mathbb{N}$ (in particular, this implies $K \leq 1$);
    \item $B(y_n, K \, d(x_n,y_n)) \cap B(y_m, K \, d(x_m,y_m)) = \emptyset$ for all $n \neq m$.
\end{enumerate}
\end{lemma}
Clearly, if $M$ is finite, then $C^{\alpha}(M)$ is finite-dimensional, and thus it is weakly Banach-Saks.
Let $(M, d)$ be an infinite metric space. By Proposition~\ref{iso}, the space $C^\alpha(M)$ is isometrically isomorphic to $C^\alpha(\widehat{M})$, where $(\widehat{M},\widehat{d})$ denotes the completion of the metric space $(M,d)$. Therefore, in view of Proposition~\ref{propemb}, we may assume without loss of generality that $(M,d)$ is complete.

By Proposition \ref{propemb} and Theorem \ref{zan} it is enough to show that $\ell_{\infty} \hookrightarrow C^\alpha(M)$.
Let $K>0$ and let $(x_n,y_n)$ be a sequence of pairs of points in $M$ from Lemma \ref{ciagi}.
   Let us define the sequence of functions $f_n :M \rightarrow \mathbb{R}$ as follows 
    $$
    f_n(x):=\max\left\{ \min\left\{1,d^{\alpha}(x_n,y_n)-\frac{d^\alpha(x,y_n)}{K^\alpha}\right\}, 0 \right\},  \,\, x \in M
    $$
     It is trivial that 
     \[
     \lVert f_n\rVert_{C(M)}\leq 1.
     \]
     Moreover, we have
     \begin{eqnarray} \label{oszacowania}
       \rho_{\alpha, M}(f_n)\leq \frac{1}{K^\alpha}.
     \end{eqnarray}
     Indeed, since the maps $\xi \mapsto \max\{\xi, 0\}$ and $\xi \mapsto \min\{1, \xi\}$ are $1$-Lipschitz, we have 
     
    \begin{align*}
        \rho_{\alpha, M}(f_n)={}    &   \sup_{x\neq y} \frac{\vert f_n(x)-f_n(y)\vert}{d^\alpha(x,y)}\\
        \leq &  \sup_{x\neq y}\frac{\left|d^{\alpha}(x_n,y_n)-\frac{d^\alpha(x,y_n)}{K^\alpha} -\left(d^{\alpha}(x_n,y_n)-\frac{d^\alpha(y,y_n)}{K^\alpha}\right)  \right|}{d^\alpha(x,y)}\\
        = & \sup_{x\neq y} \frac{\left|\frac{d^\alpha(x,y_n)}{K^\alpha} -\frac{d^\alpha(y,y_n)}{K^\alpha}  \right|}{d^\alpha(x,y)}.
    \end{align*}
    Next, we use  the inequality $\vert a^\alpha-b^\alpha\vert\leq \vert a-b\vert^\alpha$ for every $0<\alpha\leq 1$ and $a,b>0$ to compute
    \begin{align*}
        \rho_{\alpha, M}(f_n)\leq{}    &   \sup_{x\neq y}\frac{\vert d(x,y_n)-d(y,y_n)\vert^\alpha}{K^\alpha \cdot d^\alpha(x,y)}\\
    \leq{}  &   \sup_{x\neq y} \frac{\vert d(x,y)\vert^\alpha}{K^\alpha \cdot d^\alpha(x,y)}=\frac{1}{K^\alpha},
    \end{align*}
    and this finishes the proof of (\ref{oszacowania}).

 For each $n \in \mathbb{N}$, let us denote $B_n:=B(y_n,K\cdot d(x_n,y_n))$. From the very definition of $f_n$ we have $supp (f_n):=\{x \in M: f_n (x) \neq 0\} = B_n$.
 Observe that, thanks to property $(iii)$, we have $supp(f_n)\cap supp(f_m)=\emptyset$ for $n\neq m$. Hence, for every $x\in M$, there exists at most one $n\in\mathbb{N}$ such that $f_n(x)\neq 0$. Denote $n(x)$ the unique $n$ with $f_n(x)\neq 0$ (or $n(x)=1$ if there is no such $n$). We are ready now to show that $\ell_{\infty}  \hookrightarrow C^{\alpha}(M)$. For this purpose we define the operator $   
    T \colon \ell_{\infty}  \rightarrow C^{\alpha}(M) $ in the following manner
     \begin{align*}
     T(a)(x):=a(n(x))\cdot f_{n(x)}(x), \,\, a\in \ell_{\infty}, x \in M.
    \end{align*}
     It is clearly lineal, and we will show that 
    \begin{eqnarray}\label{trzy}
    \lVert a\rVert_{\ell_\infty}\leq \lVert T(a)\rVert_{C^\alpha(M)}\leq \left(\frac{2}{K^\alpha}+1\right)\lVert a\rVert_{\ell_\infty}, \,\, \text{for}\,\, a\in \ell_\infty.
    \end{eqnarray}
    Let us fix $a\in\ell_\infty$. 
    On the one hand, fix $x,y\in M$. If $n(x)=n(y)$, then by (\ref{oszacowania}) we have
    \begin{align*}
    \vert T(a)(x)-T(a)(y)\vert =& \vert a(n(x)) (f_{n(x)}(x)-f_{n(x)}(y))\vert \\ \leq & \lVert a\rVert_{\ell_\infty} \cdot \rho_{\alpha, M} (f_{n(x)})d^\alpha(x,y)\leq \lVert a\rVert_{\ell_\infty} \cdot \frac{d^\alpha(x,y)}{K^\alpha}.
    \end{align*}
    Hence, assume $n(x)\neq n(y)$. Thus we have $f_{n(x)}(y)=f_{n(y)}(x)=0$, and consequently, by (\ref{oszacowania}), we obtain 
    \begin{align*}
    \vert T(a)(x)-T(a)(y)\vert 
    ={} &    \vert a(n(x))\cdot f_{n(x)}(x) - a(n(y))\cdot f_{n(y)}(y)\vert \\
    = &\vert a(n(x))\cdot (f_{n(x)}(x)-f_{n(x)}(y)) - a(n(y))\cdot (f_{n(y)}(y)-f_{n(y)}(x))\vert\\
    \leq{}  &   |a(n(x))| \vert f_{n(x)}(x)-f_{n(x)}(y) \vert + |a(n(y))| \vert f_{n(y)}(y)-f_{n(y)}(x)\vert\\
        \leq{}  &   |a(n(x))| \rho_{\alpha, M} (f_{n(x)})d^\alpha(x,y) + |a(n(y))|\rho_{\alpha, M} (f_{n(x)})d^\alpha(x,y) \\
    \leq & \lVert a\rVert_{\ell_\infty} \cdot\frac{2 d^\alpha(x,y)}{K^\alpha}.
    \end{align*}
    Therefore,
    \[
     \rho_{\alpha, M}(T(a)) \leq \frac{2}{K^\alpha} \|a\|_{\ell_\infty},
    \]
    and it is clear that 
    \[
    \lVert T(a)\rVert_{C(M)}\leq\lVert a\rVert_{\ell_\infty} \cdot \sup_n\lVert f_n\rVert_{C(M)}\leq\lVert a\rVert_{\ell_\infty}.
    \]

    On the other hand, for every $k\in\mathbb{N}$, $n(y_k)=k$ and $x_k\not\in \bigcup_{l=1}^\infty B_l$. Thus, for every $l \in \mathbb{N}$ we have  $f_l(x_k)=0$ and
    \begin{eqnarray*}
    \vert T(a)(x_k)- T(a)(y_k) \vert&=&|T(a)(y_k)|=|a(n(y_k))\cdot f_{n(y_k)}(y_k)|\\
    &= &|a(k) \cdot f_k(y_k)|=|a(k)| \cdot \min\{1,d^{\alpha}(x_k,y_k)\}.
    \end{eqnarray*}
    This shows that either $|a(k)|\leq\lVert T(a)\rVert_{C(M)}$ or $|a(k)|\leq \rho_{\alpha, M}(T(a))$. Since $k$ is arbitrary, we get
    \[
    \lVert a\rVert_{\ell_\infty}\leq \rho_{\alpha, M}(T(a)) + \lVert T(a)\rVert_{C(M)} = \lVert T(a)\rVert_{C^\alpha(M)},
    \]
    this finishes the proof of (\ref{trzy}) and the whole proof follows.
\end{proof}

\printbibliography

\end{document}

%% file: packages.tex
\usepackage[utf8]{inputenc} 

\usepackage[width=16cm,height=24cm]{geometry} 

\usepackage[backend=biber,
style=numeric, url=false,doi=false, isbn=false 
, maxnames=10 
, sorting=nyt]{biblatex}
\usepackage[autostyle=true]{csquotes} 

\usepackage{amsfonts,amsmath,amssymb,amsthm}
\usepackage{graphics,graphicx,epsfig}

\usepackage{comment}
\usepackage{color}
\usepackage{bbm}
\usepackage{bbold}
\usepackage{enumitem}
\usepackage{appendix}
\usepackage{tikz, tikz-cd}

%% file: naming.tex
\newtheorem{thm}{Theorem}[section]

\newtheorem{lemma}[thm]{Lemma}
\newtheorem{prop}[thm]{Proposition}

\newtheorem*{conjecture*}{Conjecture}

